\newtheorem{thm}{Theorem}
\newtheorem{lemma}[thm]{Lemma}
\title{\LARGE \bf
Multivariable analytic interpolation with complexity constraints:\\ A modified Riccati approach
%A modified Riccati approach to analytic interpolation: \\ the multivariable case
}
\author{Yufang Cui$^{1}$,  and Anders Lindquist$^{2}$% <-this % stops a space
\thanks{$^{1}$Department of Automation, Shanghai
Jiao Tong University, Shanghai, China. {\tt\small cui-yufang@sjtu.edu.cn}}%
\thanks{$^{2}$Department of Automation and School of Mathematical Sciences, Shanghai
Jiao Tong University, Shanghai, China. {\tt\small alq@math.kth.se}}%
}
\begin{document}

\maketitle
\thispagestyle{empty}
\pagestyle{empty}

%%%%%%%%%%%%%%%%%%%%%%%%%%%%%%%%%%%%%%%%%%%%%%%%%%%%%%%%%%%%%%%%%%%%%%%%%%%%%%%%
\begin{abstract}
Analytic interpolation problems with rationality and derivative constraints occur in many applications in systems and control. In this paper we present a new method for the multivariable case, which generalizes our previous results on the scalar case. This turns out to be quite nontrivial, as it poses many new problems. A basic step in the procedure is to solve a Riccati type matrix equation. To this end, an algorithm based on homotopy continuation is provided.
\end{abstract}

%%%%%%%%%%%%%%%%%%%%%%%%%%%%%%%%%%%%%%%%%%%%%%%%%%%%%%%%%%%%%%%%%%%%%%%%%%%%%%%%
\section{Introduction}\label{intro}

A common problem in robust control and spectral estimation is to find an $\ell\times \ell$ matrix-valued rational function $F$, analytic in the unit disc $\mathbb{D}=\{z\mid |z|>1\}$, such that 
\begin{equation}
\label{F+F*}
F(e^{i\theta}) + F(e^{-i\theta})' > 0, \quad -\pi\leq \theta\leq\pi ,
\end{equation}
which also satifies the interpolation condition 
\begin{align}
\label{interpolation}
 \frac{1}{k!}F^{(k)}(z_{j})=W_{jk},\quad& j=0,1,\cdots,m,   \\
    &   k=0,\cdots n_{j}-1 ,\notag
\end{align}
where $'$ denotes transposition, $F^{(k)}(z)$ is the $k$th derivative of $F(z)$, and $z_0,z_1,\dots,z_m$ are distinct points in $\mathbb{D}$. We restrict the complexity of the rational function $F(z)$ by requiring that its McMillan degree be at most $\ell n$, where
\begin{equation}
\label{deg(f)}
n=(\sum_{j=0}^{m}n_j -1) .
\end{equation}
Without loss of generality we may assume that $ z_{0}=0$ and $W_{0}=\frac{1}{2}I$.
Then $F(z)$  has a realization 
 \begin{equation}
\label{ }
F(z)=\tfrac12 I + zH(I-zF)^{-1}G,
\end{equation}
where $H\in\mathbb{R}^{\ell\times\ell n}$, $F\in\mathbb{R}^{\ell n\times\ell n}$, $G\in\mathbb{R}^{\ell n\times\ell}$, the matrix $F$ has all its eighenvalues in $\mathbb{D}$ and $(H,F)$ is an observable pair. 

Let $W$ be the $\ell(n+1)\times\ell(n+1)$ matrix
\begin{equation}
\label{W}
W:=\begin{bmatrix}
W_{0}&~&~\\
~&\ddots&~\\
~&~&W_{m}
\end{bmatrix}
\end{equation}
with
\begin{equation}\label{Wj}
W_{j}=\begin{bmatrix}
W_{j0}&~&~&~\\
W_{j1}&W_{j0}&~&~\\
\vdots&\ddots&\ddots&~\\
W_{jn_{j-1}}&\cdots&W_{j1}&W_{j0}
\end{bmatrix}
\end{equation}
for each $j=0,1,\dots,m$. Moreover, let $Z$ be the $(n+1)\times (n+1)$ matrix 
\begin{equation}\label{Z}
Z:=\begin{bmatrix}
Z_{0}&~&~\\
~&\ddots&~\\
~&~&Z_{m}
\end{bmatrix}
\end{equation}
with 
\begin{equation}
\label{Zj}
Z_{j}=\begin{bmatrix}
z_{j}&~&~&~\\
1&z_{j}&~&~\\
~&\ddots&\ddots&~\\
~&~&1&z_{j}
\end{bmatrix} \quad j=0,1,\dots,m.
\end{equation}
Finally define the $n+1$-dimensional column vector
\begin{equation}\label{e}
e:=[e_{n_{0}}^{1},e_{n_{1}}^{1},\cdots,e_{n_{m}}^{1}]',
\end{equation}
where $e_{n_{j}}^{1}=[1,0,\cdots,0]\in\mathbb{R}^{n_j}$ for each $j=0,1,\dots,m$, and let $S$ be the unique solution of the Lyapunov equation
\begin{equation}
\label{Lyapunov}
S=ZSZ^*+ee' .
\end{equation}
Note that the eigenvalues of $Z$ are all located in the open unit disc $\mathbb{D}$.

The problem of determining the interpolant $F(z)$ is an inverse problems which has a solution if and only if 
\begin{equation}
\label{Pickmatrix}
W(S\otimes I_\ell) + (S\otimes I_\ell)W^* > 0 
\end{equation} 
(see, e.g., \cite{BLN}), and then there are an infinite number of solutions. We would like to find a parametrization of these  solutions. 

The special case when $\ell=1$, $m=0$ and $n_0=n+1$ is called the {\em rational covariance extension problem\/}  and was first formulated by Kalman \cite{Kalman-81} and then solved in steps in \cite{Gthesis,G87,BLGuM,BLpartial}, where a complete parameterization in terms of spectral zeros was obtained, and in \cite{BGuL,SIGEST}, where a convex optimization approach was introduced. This problem have occurred in many applications in systems and control such as  in signal and speech processing \cite{DelsarteGeninKamp} and in identification \cite{LPbook}. 
The case $n_0=n_1=\dots =n_m=1$ and $m=n$ is called the {\em Nevanlinna-Pick interpolation problem with degree constraint} and was early considered in robust control \cite{DFT} and many other applications in systems and control \cite{YS,DGK81}. It was completely  parameterized, again in steps, in \cite{GeorgiouNP,geo1999,BGL2001,BGL2000}, and a convex optimization approach was introduced in \cite{BGL2001,BGL2000}. Since then a large number of papers on the more general scalar problem has appeared \cite{BN,GL1,BLkimura,PF2006,FPRpicci}. We refer to \cite{LPbook} for further references.

The multivariable case $(\ell>1)$ is much harder, and the nice spectral-zero assignability present in the scalar case appears to be lost or at lease elusive. Restrictive classes of such problems have been considered in large number of papers \cite{Gthesis,BLN,G2006,G2007,FPZbyrneslindquist,RFP,Avventi,Takyar,ZhuBaggio,Zhu}, but the theory remains incomplete, and many problems have been left open.

In \cite{CLccdc19} we presented a complete parameterization of the problem presented above for the scalar case ($\ell =1$) in terms of a modified Riccati equation, which was first introduced for  more restricted classes of interpolation problems in \cite{BLpartial} and \cite{Lascc}. As \cite{BLpartial} studied the rational covariance extension problem, the modified Riccati equation was named the {\em Covariance Extension Equation} (CCE), and we retain this name although the problems now considered are much more general. 

In the present paper we take a first step in generalizing the results in \cite{CLccdc19} to the multivariable case $(\ell>1)$. In Section~\ref{prel} we provide the basic tools for the multivariable problem.  To describe our ultimate goal we provide in Section~\ref{scalar} a brief review of the scalar results in \cite{CLccdc19}, and then in Section~\ref{sec:matrixcase} we develop the multivariable case in the same spirit. In Section~\ref{main} we present our main results and an algorithm based on homotopy continuation in the style of \cite{BFL,CLccdc19}.  The results fall somewhat short of what the scalar case promises, and, given some results in \cite{Takyar}, we suspect that this is due to problems introduced by the nontrivial Jordan structure of the multivariable case.  In Section\ref{modelred} we provide some simulations to illustrate this and also an example of model reduction.  Finally, in  Section~\ref{conclusion} we give some concluding remarks and suggestions for future research.

\section{Preliminaries}\label{prel}

Defining $\Phi_+(z):=F(z^{-1})$ we have
\begin{equation}
\label{ }
\Phi_+(z)=\tfrac12 I + H(zI-F)^{-1}G,
\end{equation}
which has all its poles in the unit disc $\mathbb{D}$. In view of  \eqref{F+F*} 
\begin{displaymath}
\Phi_+(e^{i\theta})+\Phi_+(e^{-i\theta})'>0, \quad -\pi\leq \theta\leq\pi ,
\end{displaymath}
and hence $\Phi_+(z)$ is (strictly) positive real \cite[Chapter 6]{LPbook}. By a coordinate transformation $(H,F,G)\to(HT^{-1},TFT^{-1},TG)$ we can choose $(H,F)$ in the observer canonical form
\begin{displaymath}
H=\text{diag}(h_{t_1},h_{t_2},\dots,h_{t_\ell}) \in \mathbb{R}^{\ell\times n\ell}
\end{displaymath}
with $h_\nu:=(1,0,\dots,0)\in\mathbb{R}^\nu$, and
\begin{equation}
\label{F}
F=J-AH \in\mathbb{R}^{n\ell\times n\ell}
\end{equation}
where $J:=\text{diag}(J_{t_1},J_{t_2},\dots, J_{t_\ell})$ with $J_\nu$ the $\nu\times\nu$ shift matrix
\begin{displaymath}
J_\nu =\begin{bmatrix}0&1&0&\dots&0\\0&0&1&\dots&0\\\vdots&\vdots&\vdots&\ddots&0\\
0&0&0&\dots&1\\0&0&0&\dots&0\end{bmatrix}
\end{displaymath}
and $A\in\mathbb{R}^{n\ell\times \ell}$.
The numbers $t_1,t_2,\dots,t_\ell$ are the {\em observability indices\/} of $\Phi_+(z)$, and 
\begin{equation}
\label{tsum}
t_1+t_2+\dots+t_\ell=n\ell.
\end{equation}
Next define $\Pi(z):=\text{diag}(\pi_{t_1}(z),\pi_{t_2}(z),\dots,\pi_{t_\ell}(z))$, where $\pi_\nu(z)=(z^{\nu-1},\dots,z,1)$, and the $\ell\times\ell$ matrix polynomial
\begin{equation}
\label{A(z)}
A(z)=D(z) +\Pi(z)A,
\end{equation}
where 
\begin{equation}
\label{D(z)}
D(z):=\text{diag}(z^{t_1},z^{t_2},\dots, z^{t_\ell}).
\end{equation}

\begin{lemma}\label{Ainvlem}
$H(zI-F)^{-1}=A(z)^{-1}\Pi(z)$
\end{lemma}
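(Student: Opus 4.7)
The plan is to verify the identity by clearing denominators and reducing to a simple block computation. I would rewrite the claim as $A(z)H = \Pi(z)(zI - F)$, which is equivalent to the stated formula once we know $A(z)$ and $zI-F$ are invertible in the appropriate sense (on the right, $zI-F$ is invertible for $z$ outside the spectrum of $F$; on the left, $A(z)$ is invertible generically in $z$ since $\det A(z)$ is a nonzero polynomial).

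Next I would substitute the given expressions $F = J - AH$ and $A(z) = D(z) + \Pi(z)A$ to see that the two $\Pi(z)AH$ terms match and cancel:
\begin{equation*}
\Pi(z)(zI - F) = \Pi(z)(zI - J) + \Pi(z)AH,
\end{equation*}
\begin{equation*}
A(z)H = D(z)H + \Pi(z)AH.
\end{equation*}
Thus the identity reduces to the $A$-free statement
\begin{equation*}
D(z)H = \Pi(z)(zI - J).
\end{equation*}

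The final step exploits the block-diagonal structure: since $D(z)$, $H$, $\Pi(z)$, and $zI-J$ are all block-diagonal with conforming block sizes $t_1,\dots,t_\ell$, it suffices to check the identity block-by-block, i.e., to verify for each $\nu = t_i$ that $z^\nu h_\nu = \pi_\nu(z)(zI_\nu - J_\nu)$. This is a direct computation: the $k$-th entry of $\pi_\nu(z)(zI_\nu - J_\nu)$ is $z \cdot z^{\nu-k} - 1\cdot z^{\nu-k+1}$ for $k\geq 2$, which vanishes, while the first entry equals $z \cdot z^{\nu-1} = z^\nu$. This matches $z^\nu h_\nu = (z^\nu, 0, \dots, 0)$ exactly.

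There is no real obstacle; the lemma is a routine verification. The only mild subtlety is recognizing that the coupling term $\Pi(z)A$ in the definition of $A(z)$ is exactly what is needed to absorb the $-AH$ term in $F$, so that the proof reduces to the single-shift identity above.
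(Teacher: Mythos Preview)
Your proof is correct and follows essentially the same approach as the paper: both establish the equivalent identity $A(z)H=\Pi(z)(zI-F)$ by expanding with $F=J-AH$ and $A(z)=D(z)+\Pi(z)A$, reducing to $\Pi(z)(zI-J)=D(z)H$. The paper simply asserts this last identity, whereas you supply the explicit block-by-block verification.
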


\begin{proof}
First note that $$\Pi(z)(zI-J)=\text{diag}(z^{t_1},z^{t_2},\dots, z^{t_\ell})H.$$ Then
\begin{displaymath}
\Pi(z)(zI-F)=\Pi(z)(zI-J)+\Pi(z)AH=A(z)H
\end{displaymath}
as claimed.
\end{proof}
Consequently
 \begin{equation}
\label{AinvB}
\Phi_+(z)=\tfrac12 A(z)^{-1}B(z),
\end{equation}
where
\begin{displaymath}
B(z)=D(z) +\Pi(z)B
\end{displaymath}
with
\begin{equation}
\label{AG2B}
B=A+2G.
\end{equation}

Moreover let $V(z)$ be the minumum-phase spectral factor of
\begin{displaymath}
V(z)V(z^{-1})'=\Phi(z) := \Phi_+(z) + \Phi_+(z^{-1})' .
\end{displaymath}
We know \cite[Chapter 6]{LPbook} that $V(z)$ has a realization of the form
\begin{displaymath}
V(z)=H(zI-F)^{-1}K + R,
\end{displaymath}
which, by Lemma~\ref{Ainvlem}, can be written
\begin{equation}
\label{ }
V(z)=A(z)^{-1}\Sigma(z)R,
\end{equation}
where
\begin{equation}
\label{Sigma(z)}
\Sigma(z)=D(z)+\Pi(z)\Sigma 
\end{equation}
with 
\begin{equation}
\label{Sigma}
\Sigma = A+KR^{-1}. 
\end{equation}

From stochastic realization theory \cite[Chapter 6]{LPbook} we have 
\begin{align}
  K  & =(G-FPH')(R')^{-1}  \label{K}\\
  RR'  &  = I-HPH' \label{R}
\end{align}
where $P$ is the unique minimum solution of the algebraic Riccati equation
\begin{equation}
\label{Riccati}
P=FPF' + (G-FPH')(I-HPH')^{-1}(G-FPH')'.
\end{equation}
Now, from \eqref{F}, \eqref{K} and \eqref{R} we have
\begin{align*}
   G &= JPH'  -AHPH' +KR^{-1}(I-HPH') \\
      & = \Gamma PH' +KR^{-1},
\end{align*}
where, in view of \eqref{Sigma},
\begin{equation}
\label{Gamma}
\Gamma=J-\Sigma H.
\end{equation}
Hence
\begin{equation}
\label{G}
G=\Gamma PH'+\Sigma -A.
\end{equation}
Since $F=\Gamma +KR^{-1}H$ and $G-\Gamma PH'=KR^{-1}$, \eqref{Riccati} can be written
\begin{align*}
   P & = (\Gamma +KR^{-1}H)P(\Gamma +KR^{-1}H)' +KK'\\
    &  = \Gamma P\Gamma' +\Gamma PH'(KR^{-1})' +KR^{-1}HP\Gamma' \\
    & \phantom{xxxxxxxxxxxxxxxxxxxx} +KR^{-1}(KR^{-1})' ,
\end{align*}
where we have also used \eqref{R}. Inserting $KR^{-1}=G-\Gamma PH'$ we have
\begin{equation}
\label{AREmod}
P=\Gamma (P-PH'HP)\Gamma' +GG' .
\end{equation}

\section{A review of the scalar case}\label{scalar}

To motivate our approach to the multivariable problem presented in Section~\ref{intro} we shall briefly review some  results on the scalar case ($\ell=1$) presented in \cite{CLccdc19}. To stress the fact that the matrices $H,G,A,B$ and $\Sigma$ are now $n$-vectors we shall here denote them $h,g,a,b$ and $\sigma$ instead, and the scalar $R$ will be denoted $\rho$. 

Introducing the  interpolation conditions \eqref{interpolation} into the calculation we obtain
\begin{displaymath}
g=u+U\sigma+U\Gamma Ph,
\end{displaymath}
where $P$ is the unique solution of the algebraic Riccati equation \eqref{AREmod}. Elimination $g$ we then have the modified Riccati equation 
\begin{equation}\label{scalarCCE}
\begin{split}
P=&\Gamma(P-Phh'P)\Gamma'\\
&+(u+U\sigma+U\Gamma Ph)(u+U\sigma+U\Gamma Ph)' ,
\end{split}
\end{equation}
where $u\in\mathbb{R}^n$ and $U\in\mathbb{R}^{n\times n}$ are given by 
\begin{equation}
\label{ }
\begin{bmatrix}u&U\end{bmatrix}:=\begin{bmatrix}0&I_{n}\end{bmatrix} M
\end{equation}
with
\begin{subequations}
\begin{equation}
\label{M}
M=\begin{bmatrix}e&V\end{bmatrix}^{-1}(W+\frac{1}{2}I)^{-1}(W-\frac{1}{2}I)\begin{bmatrix}e&V\end{bmatrix}
\end{equation}
and
\begin{equation}
\label{Vscalar}
V:=\begin{bmatrix}Ze&Z^{2}e&\cdots&Z^n e\end{bmatrix} .
\end{equation}
\end{subequations}
We showed in \cite{CLccdc19} that there is a map sending $W$ to $u$ which is a diffeomorphism and that there is a linear map $L$ such that $U=Lu$. 

Let $\mathcal{S}_{n}$ be the space of Schur polynomials (i.e., polynomials with all zeros in the open unit disc $\mathbb{D}$) of the form 
\begin{equation}\label{a}
a(z)=z^{n}+a_{1}z^{n-1}+\cdots+a_{n} ,
\end{equation}
and let $\mathcal{P}_n$ be the $2n$-dimensional  space of pairs $(a,b)\in\mathcal{S}_{n}\times\mathcal{S}_{n}$ such that $b(z)/a(z)$ is positive real. Moreover, for each $\sigma\in\mathcal{S}_{n}$, let $\mathcal{P}_n(\sigma)$ be the submanifold of $\mathcal{P}_n$ for which 
\begin{equation}
\label{ab2sigma}
a(z)b(z^{-1})+b(z)a(z^{-1})=2\rho^{2}\sigma(z)\sigma(z^{-1})
\end{equation}
holds, where $\rho^2$ is the appropriate normalizing factor. It was shown in \cite{BLduality} that  $\{\mathcal{P}_n(\sigma)\mid \sigma\in\mathcal{S}_{n}\}$ is a {\em foliation\/} of $\mathcal{P}_n$, i.e., a family of smooth nonintersecting submanifolds, called {\em leaves}, which together cover  $\mathcal{P}_n$. Moreover, for any polynomial \eqref{a}, let $a_*(z)=z^n a(z^{-1})$ be the reversed polynomial of $a(z)$. Finally, let $\mathcal{W}_+$ be the space of all $W$ such that the generalized Pick matrix $WS+SW^*$ is positive definite, where $S$ the unique solution of the Lyapunov equation \eqref{Lyapunov} .

The following result was proved in \cite{CLccdc19}.

\medskip

\begin{thm}
Let $\ell=1$. For each $(\sigma,W)\in\mathcal{S}_n\times\mathcal{W}_+$, the modified Riccati equation \eqref{scalarCCE}  has a unique  positive definite solution $P$ such that $hPh'<1$, and the problem to find a rational function $b_*(z)/a_*(z)$ satisfying the interpolation conditions \eqref{interpolation} and the positivity condition \eqref{ab2sigma} has a unique solution given by
\begin{equation}\label{P2ab}
\begin{split}
a&=(I-U)(\Gamma Ph+\sigma)-u\\
b&=(I+U)(\Gamma Ph+\sigma)+u
\end{split}
\end{equation}
In fact, the map sending $(a,b)\in\mathcal{P}_n(\sigma)$ to $W\in\mathcal{W}_+$ is a diffeomorphism. Finally, the degree of the interpolant equals the rank of $P$.
\end{thm}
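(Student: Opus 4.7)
My plan is to exhibit an explicit bijection between $\mathcal{P}_n(\sigma)$ and $\mathcal{W}_+$ realized through the modified Riccati equation, and then verify smoothness in both directions together with the degree formula. The forward direction is mostly a packaging of standard stochastic realization: given $(a,b)\in\mathcal{P}_n(\sigma)$, set $\Phi_+(z)=\tfrac12 b_*(z)/a_*(z)$, which is positive real by definition of $\mathcal{P}_n$ and has $\sigma$ as its numerator spectral factor thanks to \eqref{ab2sigma}. Equations \eqref{K}--\eqref{R} then produce a unique $P\ge 0$ with $hPh'<1$ solving the ARE \eqref{AREmod}, and the Taylor coefficients of $F(z)=\Phi_+(z^{-1})$ at $z_0,\dots,z_m$ furnish a $W$ for which the generalized Pick condition \eqref{Pickmatrix} holds by positive realness, so $W\in\mathcal{W}_+$. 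All steps are algebraic in $(a,b)$, so this forward map is smooth.

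The substantive content is the inverse direction. Given $(\sigma,W)$, compute $(u,U)$ from \eqref{M}--\eqref{Vscalar}; the CCE \eqref{scalarCCE} is then just the ARE \eqref{AREmod} with $g$ eliminated via the interpolation-enforced relation $g=u+U\sigma+U\Gamma Ph$. I would prove existence and uniqueness of $P$ with $hPh'<1$ by homotopy continuation in the style of \cite{BFL}: choose a reference pair $(\sigma^{\circ},W^{\circ})$ for which a solution $P^{\circ}$ is known (for instance a central interpolant), deform smoothly to $(\sigma,W)$ inside the connected set $\mathcal{S}_n\times\mathcal{W}_+$, and extend $P$ along the path via the implicit function theorem, using observability of $(h,\Gamma)$ and the positivity margin supplied by $W\in\mathcal{W}_+$ to keep the CCE Jacobian invertible. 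Once $P$ is secured, the formulas \eqref{P2ab} produce a candidate $(a,b)$; substitution using \eqref{Gamma}, \eqref{G}, and \eqref{AG2B} verifies both that $(a,b)\in\mathcal{P}_n(\sigma)$ and that its image under the forward map is precisely the prescribed $W$.

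With both maps in hand, the bijection is a diffeomorphism because each side is built from rational algebraic operations and a smoothly parametrized Riccati inversion on an open $n$-dimensional manifold (for fixed $\sigma$). The degree formula follows because the McMillan degree of the interpolant $b_*/a_*$ equals the dimension of a minimal realization of $\Phi_+$, which by the standard state-space theory of positive real functions coincides with the rank of the state covariance $P$.

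The main obstacle is the uniqueness claim for the CCE. The equation couples a Riccati quadratic in $P$ with the nonlinear term $U\Gamma Ph$, so the classical monotonicity machinery for the plain ARE does not apply, and one must show that the CCE Jacobian never degenerates on the admissible set $\{P\ge 0:\;hPh'<1\}$. This is precisely where the spectral-zero constraint $\sigma$ and the Pick positivity of $W$ must be exploited in tandem, in order to prevent the homotopy from running into a fold singularity or from driving $hPh'$ up to $1$; the dimension count $\dim\mathcal{P}_n(\sigma)=n=\dim\mathcal{W}_+$ then makes the whole picture rigid enough that uniqueness propagates globally along the continuation.
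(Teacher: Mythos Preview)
The paper does not prove this theorem here. Section~\ref{scalar} is an explicit review of prior work, and the sentence immediately preceding the theorem reads ``The following result was proved in \cite{CLccdc19}.'' No proof (not even a sketch) is given in the present article; the theorem is simply quoted. So there is no in-paper argument to compare your proposal against.

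As for your outline on its own merits: the architecture is the one used in \cite{CLccdc19,BFL,BLpartial}. The forward map $(a,b)\mapsto W$ via positive-real realization is standard and your description of it is fine. For the inverse direction, homotopy continuation is indeed the device used in \cite{BFL,CLccdc19}, but what you have written is a description of the obstacle rather than its resolution: you assert that observability of $(h,\Gamma)$ together with Pick positivity keeps the CCE Jacobian invertible along the path, but you do not supply an argument for this, and this is exactly the nontrivial step. Your final paragraph concedes as much. A genuine proof needs either a properness/degree argument for the map $\mathcal{P}_n(\sigma)\to\mathcal{W}_+$ (as in the Byrnes--Lindquist line of work) or an explicit Jacobian computation showing no fold occurs; neither is present here. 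One minor inconsistency: the theorem says ``positive definite,'' yet the degree formula $\deg F=\text{rank}\,P$ is only informative when $P$ can drop rank, and you yourself write $P\ge 0$ in the continuation argument; you should reconcile this (the intended statement is positive \emph{semi}definite, which is how \cite{BLpartial} formulates it).
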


\medskip

Consequently, by \eqref{ab2sigma}, for each $\sigma\in\mathcal{S}_n$ there is a unique interpolant $b_*(z)/a_*(z)$ with the prescribed properties such that
\begin{displaymath}
\rho^2\frac{\sigma(z)\sigma(z^{-1})}{a(z)a(z^{-1})}=\frac12\left[\frac{b(z)}{a(z)} +\frac{b(z^{-1})}{a(z^{-1})} \right] .
\end{displaymath}
Hence 
\begin{displaymath}
V(z)= \rho\frac{\sigma(z)}{a(z)}
\end{displaymath}
is the correspondning spectral factor.

In \cite{CLccdc19} we solved \eqref{scalarCCE} by homotopy continuation by taking $u(\lambda)=\lambda u$ with $\lambda$ varying from $0$ to $1$. We showed that this provides an efficient and robust algorithm for analytic interpolation with degree constraint that can handle situations which are difficult with the optimization approach, especially when system poles are close to the unit circle. 

\section{The matrix case}\label{sec:matrixcase}

Next we turn to the general multivariable case and introduce the interpolation condition \eqref{interpolation} in the matrix setting of Section~\ref{prel}. 
%The proof of the following lemma 
%%will be deferred to the appendix.
%is straight-forward but tedious and will be omitted here for lack of space. 

\begin{lemma}
Let the matrices $W$ and $Z$ be given by \eqref{W} and \eqref{Z}, respectively. Then the interpolation condition \eqref{interpolation} can be written
\begin{equation}
\label{interpolation2}
F(Z\otimes I_\ell)=W ,
\end{equation}
where $\otimes$ denotes Kronecker product.
\end{lemma}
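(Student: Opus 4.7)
The plan is to exploit the block-diagonal structure of $Z$ and $W$ and thereby reduce the claim to a Taylor expansion on a single Jordan block. Since $Z = \text{diag}(Z_0, \ldots, Z_m)$, the matrix $Z \otimes I_\ell$ is block diagonal with blocks $Z_j \otimes I_\ell$, and because a matrix function inherits the block-diagonal structure of its argument, $F(Z \otimes I_\ell)$ will be block diagonal with blocks $F(Z_j \otimes I_\ell)$. Since $W$ has the matching form $\text{diag}(W_0, \ldots, W_m)$, the proof reduces to showing $F(Z_j \otimes I_\ell) = W_j$ for each $j$.

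Fixing $j$, I would write $Z_j \otimes I_\ell = z_j I_{n_j \ell} + \widetilde N_j$ with $\widetilde N_j := (Z_j - z_j I_{n_j}) \otimes I_\ell$, where $N_j := Z_j - z_j I_{n_j}$ is the $n_j \times n_j$ nilpotent shift carrying $1$'s on its subdiagonal. Then $\widetilde N_j^{n_j} = N_j^{n_j} \otimes I_\ell = 0$, so the Taylor series of $F$ about $z_j$ truncates after $n_j$ terms once $\zeta$ is replaced by $Z_j \otimes I_\ell$. Under the standard functional-calculus convention for a matrix-valued $F$ evaluated at a Kronecker-type argument, this substitution yields
\begin{displaymath}
F(Z_j \otimes I_\ell) = \sum_{k=0}^{n_j - 1} N_j^k \otimes \frac{F^{(k)}(z_j)}{k!},
\end{displaymath}
and the interpolation conditions \eqref{interpolation} then replace each Taylor coefficient $F^{(k)}(z_j)/k!$ by $W_{jk}$.

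The last step is a direct inspection of the block pattern: since $N_j^k$ carries $1$'s precisely on its $k$-th subdiagonal, the Kronecker product $N_j^k \otimes W_{jk}$ places the $\ell \times \ell$ block $W_{jk}$ on the $k$-th block subdiagonal of an $n_j \ell \times n_j \ell$ matrix, and summing for $k = 0, \ldots, n_j - 1$ reproduces exactly the lower block-triangular Toeplitz pattern defining $W_j$ in \eqref{Wj}. The main obstacle I anticipate is making unambiguous the convention by which a matrix-valued $F$ is evaluated at a Kronecker-type argument, i.e., the rule $C\,(\zeta - z_j)^k \mapsto N_j^k \otimes C$ for constant matrices $C \in \mathbb{R}^{\ell \times \ell}$; once that convention is in place, nilpotency of $\widetilde N_j$ and routine block bookkeeping finish the argument.
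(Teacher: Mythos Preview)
Your proof is correct. Both your argument and the paper's proceed by first reducing to a single block $Z_j$ via block-diagonality and then invoking a power-series representation; the difference is the expansion point. The paper expands $F$ in its Maclaurin series $F(z)=\sum_{k\ge0}C_kz^k$ and \emph{defines} $F(Z_j\otimes I_\ell):=\sum_{k\ge0}Z_j^k\otimes C_k$, then asserts (with the computation ``omitted here for lack of space'') that this infinite sum equals $W_j$. You instead expand about $z_j$, so that nilpotency of $N_j=Z_j-z_jI_{n_j}$ truncates the series to $n_j$ terms whose coefficients are precisely the interpolation data $W_{jk}$; the lower block-Toeplitz pattern of $W_j$ then drops out from the subdiagonal structure of the powers $N_j^k$. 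Your route is a bit more direct and renders the link to \eqref{interpolation} transparent without any suppressed calculation; the paper's route has the advantage of fixing once and for all the functional-calculus convention $F(Z\otimes I_\ell):=\sum_k Z^k\otimes C_k$, which resolves exactly the ambiguity you flag at the end of your proposal. The two are consistent: substituting $Z_j=z_jI+N_j$ into the paper's Maclaurin series and regrouping by powers of $N_j$ (using that $z_jI$ and $N_j$ commute) reproduces your finite Taylor sum.
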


\begin{proof}
Since $F(z)$ is analytic in $\mathbb{D}$, it has the representation 
\begin{displaymath}
F(z)=\sum_{k=0}^\infty C_k z^k
\end{displaymath}
for all $z\in\mathbb{D}$, where $C_0=\frac12 I_\ell$. A straight-forward but tedious calculation, omitted here for lack of space,
yields
\begin{displaymath}
F(Z_j\otimes I_\ell)= \sum_{k=0}^\infty (Z_j)^k\otimes C_k =W_j,
\end{displaymath}
where $W_j$, defined by \eqref{Wj}, is given by \eqref{interpolation}. 
Then \eqref{interpolation2} follows from \eqref{Z} and \eqref{W}.
\end{proof}

\medskip

Let $A_*(z)$ be the reversed matrix polynomial 
\begin{equation}
\label{A*}
A_*(z)=D(z)A(z^{-1})=I_\ell +D(z)\Pi(z^{-1})A,
\end{equation}
where $D(z)$ is given by \eqref{D(z)}, and let $B_*(z)$ be defined in the same way in terms of $B(z)$. Then
\begin{equation}
\label{AB2F}
F(z)=\tfrac{1}{2}A_*(z)^{-1}B_*(z)
\end{equation}
and the interpolation condition  \eqref{interpolation2} can be written
\begin{equation}
\label{interpolation3}
2A_*(Z\otimes I_\ell)W=B_*(Z\otimes I_\ell).
\end{equation}
Moreover, let the $\ell\times\ell n$ matrices $N_1, N_2, \dots, N_t$ be defined by
\begin{equation}
\label{N}
D(z)\Pi(z^{-1})=N_1z+N_2z^2+\dots +N_tz^t,
\end{equation}
where $t$ is the largest observability index. Then 
\begin{displaymath}
A_*(z) =I_\ell + A_1z+A_2z^2+\dots+A_tz^t,
\end{displaymath}
where $A_k=N_kA$. For later use we observe that 
\begin{equation}\label{Nmatrix}
N=\begin{bmatrix}N_{1}\\\vdots\\N_{t}\end{bmatrix}\in\mathbb{R}^{\ell t\times \ell n}, 
\; N_{k}=\begin{bmatrix}e_{t_1}^{k}\\~&e_{t_2}^{k}\\~&~&\ddots\\~&~&~&e_{t_\ell}^{k}\end{bmatrix}
	\end{equation}
where $e_{j}^{k} $ is a $1\times j$ row vector with the k:th element 1 and the others 0 whenever $k\leq j$, and a zero row vector of dimension $1\times j$ when $k>j$.

Next we reformulate \eqref{interpolation3} as
\begin{equation}
\label{interpolation4}
M\begin{bmatrix}
I_{\ell(n+1)}\\
I_{n+1}\otimes A_{1}\\
\vdots\\
I_{n+1}\otimes A_{t}
\end{bmatrix}W=\frac12M\begin{bmatrix}
I_{\ell(n+1)}\\
I_{n+1}\otimes B_{1}\\
\vdots\\
I_{n+1}\otimes B_{t}
\end{bmatrix} ,
\end{equation}
where $M$ is the $\ell(n+1)\times\ell(n+1)(t+1)$ matrix
\begin{displaymath}
M= \begin{bmatrix} (I_{\ell(n+1)}& Z\otimes I_\ell &(Z\otimes I_\ell)^2& \cdots & (Z\otimes I_\ell)^t\end{bmatrix}.
\end{displaymath}
In view of \eqref{AG2B}, \eqref{interpolation4} can be written
\begin{displaymath}
M\begin{bmatrix}
I_{\ell(n+1)}\\
I_{n+1}\otimes A_{1}\\
\vdots\\
I_{n+1}\otimes A_{t}
\end{bmatrix}(W-\frac{1}{2}I)=M\begin{bmatrix}
0_{\ell(n+1)}\\
I_{n+1}\otimes G_{1}\\
\vdots\\
I_{n+1}\otimes G_{t}
\end{bmatrix}
\end{displaymath}
or, equivalently,

\begin{equation}
\label{interpolation5}
M\begin{bmatrix}
0_{\ell(n+1)}\\
I_{n+1}\otimes G_{1}\\
\vdots\\
I_{n+1}\otimes G_{t}
\end{bmatrix}=
M\begin{bmatrix}
I_{\ell(n+1)}\\
I_{n+1}\otimes Q_1\\
\vdots\\
I_{n+1}\otimes Q_t
\end{bmatrix}T,
\end{equation}
where $Q_k:=A_k+G_k$, $k=1,2,\dots,t$, and 
\begin{equation}
\label{T}
T:=(W-\frac{1}{2}I)(W+\frac{1}{2}I)^{-1}
=\begin{bmatrix}
T_{0}&~&~\\
~&\ddots&~\\
~&~&T_{m}
\end{bmatrix},
\end{equation}
where
\begin{equation}\label{Tj}
T_{j}=\begin{bmatrix}
T_{j0}&~&~&~\\
T_{j1}&T_{j0}&~&~\\
\vdots&\ddots&\ddots&~\\
T_{jn_{j-1}}&\cdots&T_{j1}&T_{j0}
\end{bmatrix}
\end{equation}
for $j=0,1,\dots,m $.

Using the rule $(A\otimes B)(C\otimes D)=(AC)\oplus (BD)$, valid for arbitrary matrices of appropriate dimensions, \eqref{interpolation5} takes the form
\begin{align*}
    &Z\otimes  G_1 +Z^2\otimes G_2 +  \dots + Z^t\otimes G_t   \\
    &= (I_{\ell(n+1)}+ Z\otimes  Q_1 +Z^2\otimes Q_2 +  \dots + Z^t\otimes Q_t)T.
\end{align*}
Then multiplying both sides from the right by $(e\otimes I_{\ell})$ and observing that 
\begin{displaymath}
(Z^k\otimes G_k)(e\otimes I_\ell)=(Z^k e)\otimes  G_k= (Z^k e\otimes I_\ell)G_k,
\end{displaymath}
we obtain 
\begin{equation}
\label{interpolation6}
V \begin{bmatrix} G_1\\  \vdots \\G_t \end{bmatrix}=\hat{T}+(Z\otimes  Q_1 +Z^2\otimes Q_2 +  \dots + Z^t\otimes Q_t)\hat{T},
\end{equation}
where $V$ is the $\ell(n+1)\times\ell t$ matrix
\begin{equation}
\label{V}
V:=\begin{bmatrix}Ze\otimes I_{\ell}&\cdots&(Z^{t}e)\otimes I_{\ell}\end{bmatrix} 
\end{equation}
and $\hat{T}$ is the $\ell(n+1)\times\ell$ matrix
\begin{equation}
\label{That}
\hat{T} :=T(e\otimes I_{\ell}),
\end{equation}
that is 
\begin{equation}
\label{That2}
\hat{T}=\begin{bmatrix}
\hat{T}_{0}\\\hat{T}_{1}\\\vdots\\\hat{T}_{m}
\end{bmatrix},\quad\text{where}\;
\hat{T}_{j}=
\begin{bmatrix}
T_{j0}\\
T_{j1}\\
\vdots\\
T_{jn_{j}-1}
\end{bmatrix} .
\end{equation}
Therefore, since $G_k=N_kG$ and $Q_k=N_kQ$, we have 
\begin{equation}
\label{interpolation7}
VNG=\hat{T}+(Z\otimes  N_1Q + \dots + Z^t\otimes N_tQ)\hat{T},
\end{equation}
where $N$ is given by \eqref{Nmatrix}.
%\begin{equation}
%\label{N}
%N=\begin{bmatrix}
%N_{1}\\
%\vdots\\
%N_{t}
%\end{bmatrix}\in\mathbb{R}^{\ell t\times \ell n}.
%\end{equation}
Now, $VN$ is an $\ell(n+1)\times\ell n$ matrix in which the top $\ell$ rows are zero, since $z_0=0$. Therefore it takes the form
\begin{equation}
\label{VN}
VN=\begin{bmatrix} 0_{\ell\times\ell n}\\L\end{bmatrix}.
\end{equation}
For the moment assuming that the square matrix $L$ is nonsingular -- we shall later see that this is not always true -- $VN$ has a psuedo-inverse $(VN)^\dagger$, and hence \eqref{interpolation7} yields
\begin{equation}
\label{interpolation8}
G =(VN)^\dagger\hat{T} +(VN)^\dagger(Z\otimes  N_1Q + \dots + Z^t\otimes N_tQ)\hat{T}
\end{equation}
Since, by definition, $Q=A+G$, \eqref{G} yields
\begin{equation}
\label{GammaSigma2G}
G=u + U(\Gamma PH'+\Sigma),
\end{equation}
where $u:=(VN)^\dagger\hat{T}$ and $U: \mathbb{R}^{\ell n\times\ell}\to\mathbb{R}^{\ell n\times\ell}$ is the linear operator 
\begin{displaymath}
Q\mapsto (VN)^\dagger(Z\otimes  N_1Q + \dots + Z^t\otimes N_tQ)\hat{T}.
\end{displaymath}

Inserting \eqref{GammaSigma2G} into \eqref{AREmod} we obtain the modified Riccati equation 
\begin{equation}
\begin{split}
\label{CEE}
P =&\Gamma (P-PH'HP)\Gamma' \\
&+(u + U\Sigma +U\Gamma PH')(u + U\Sigma +U\Gamma PH')' .
\end{split}
\end{equation}
It was first introduced in \cite{BLpartial} for the scalar case $\ell=1$ and for the special case of covariance extension. Therefore it has been called the Covariance Extension Equation (CEE).

\section{Main results}\label{main}

Next we generalize the results of Section~\ref{scalar} to the general multivariable problem, which is considerably more difficult. Therefore several key questions will be left unanswered at this time. Nevertheless the theory in its present (preliminary) form does give an workable algorithm for large classes of problems.

\subsection{Basic results}

Now redefine $\mathcal{S}_n$ to be the class of $\ell\times\ell$ matrix polynomials \eqref{A(z)} such that $\det A(z)$ has all its zeros in the open unit disc $\mathbb{D}$. Clearly $\mathcal{S}_n$ consists of subclasses with different Jordan structure $J$ defined via \eqref{F}. In each such subclass $D(z)$ and $\Pi(z)$ in \eqref{A(z)}, as well as $N_1,N_2,\dots,N_t$ in \eqref{N}, are the same. Let $\mathcal{W}_+$ we the values in \eqref{interpolation} that satisfy the geralized Pick condition \eqref{Pickmatrix}. 

\medskip

\begin{lemma}\label{VNlem}
Let the $\ell n\times \ell n$ matrix $L$ be defined by \eqref{VN}. Then $L$ is nonsingular if and only if all observability indices are the same, i.e., $t_1=t_2=\dots=t_\ell =n$.
\end{lemma}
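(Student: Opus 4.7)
The plan is to find the columns of $VN$ in closed form, reduce nonsingularity of $L$ to a linear-independence question for the Krylov-type vectors $Z^s e$, and then pin this down using Cayley--Hamilton together with the fact that $z_0=0$ is an eigenvalue of $Z$.

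First I would compute $VN$ column by column. The mixed-product rule gives $(Z^k e\otimes I_\ell) N_k = Z^k e\otimes N_k$, hence $VN=\sum_{k=1}^{t} Z^k e\otimes N_k$. Index the columns of $N$ by pairs $(i,s)$, where $1\le i\le\ell$ labels the observability block and $1\le s\le t_i$ labels the position within that block. The sparsity pattern \eqref{Nmatrix} then shows that the $(i,s)$-column of $N_k$ equals the standard basis vector $e_i\in\mathbb{R}^\ell$ when $k=s$ and vanishes otherwise. Thus the $(i,s)$-column of $VN$ is $Z^s e\otimes e_i$. Since $z_0=0$ makes the first row of $Z$ identically zero, $(Z^s e)_1=0$ for every $s\ge 1$, confirming that the top $\ell$ rows of $VN$ indeed vanish as written in \eqref{VN}.

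Next, because the factors $e_1,\dots,e_\ell$ in the Kronecker products span orthogonal coordinate subspaces of $\mathbb{R}^{\ell(n+1)}$, the whole family $\{Z^s e\otimes e_i:1\le i\le\ell,\;1\le s\le t_i\}$ is linearly independent if and only if, for each fixed $i$, the vectors $\{Z^s e:1\le s\le t_i\}$ are linearly independent in $\mathbb{R}^{n+1}$. Once the zero top $\ell$ rows are discarded, this is exactly the condition that $L$ be nonsingular.

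Finally I would analyze those Krylov-type vectors using the structure of $Z$. Since the eigenvalues $z_0,\dots,z_m$ are distinct and each is carried by a single Jordan block, the pair $(Z,e)$ is controllable, so $\{e,Ze,\dots,Z^n e\}$ is a basis of $\mathbb{R}^{n+1}$. The eigenvalue $z_0=0$ yields a one-dimensional kernel of $Z$, so $\dim\operatorname{image}(Z)=n$; applying $Z$ to the basis and using that the characteristic polynomial of $Z$ has zero constant term (Cayley--Hamilton combined with the root $0$) shows that $Z^{n+1}e$ already lies in $\operatorname{span}\{Ze,\dots,Z^n e\}$. Hence $\{Ze,\dots,Z^n e\}$ is itself a basis of $\operatorname{image}(Z)$, while any longer segment contains more than $n$ vectors in that $n$-dimensional space and is therefore linearly dependent. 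Consequently $\{Z^s e:1\le s\le t_i\}$ is linearly independent iff $t_i\le n$, and since $\sum_i t_i=n\ell$ by \eqref{tsum}, having $t_i\le n$ for every $i$ is equivalent to $t_1=\cdots=t_\ell=n$. The one place where care is needed is the first step: the bookkeeping between the block-diagonal structure of $N_k$ and the varying block sizes $t_i$ is easy to get wrong, though not mathematically deep.
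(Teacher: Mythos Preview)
Your argument is correct and takes a genuinely different route from the paper's. The paper proceeds by a case split on $t:=\max_i t_i$: when $t=n$ it notes that both $V$ and $N$ are square of full rank and invokes Sylvester's rank inequality to force $\operatorname{rank}VN=\ell n$; when $t>n$ it isolates the first $t$ columns of $N$ (the block corresponding to $t_1$), observes that they can be written as $I_t\otimes (e^1_\ell)'$, and shows that the resulting $t$ columns of $VN$ equal $[Ze\;\cdots\;Z^t e]\otimes (e^1_\ell)'$, which has rank $n<t$ by reachability of $(Z,e)$. Your approach instead computes \emph{every} column of $VN$ in closed form as $Z^s e\otimes e_i$, decouples the $\ell$ blocks via the orthogonality of the $e_i$, and reduces both directions at once to the single Krylov statement that $\{Ze,\dots,Z^r e\}$ is independent iff $r\le n$, which you settle with controllability of $(Z,e)$ and $\dim\operatorname{image}(Z)=n$. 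What this buys you is a unified, Sylvester-free argument that also delivers the explicit column structure of $L$; what the paper's route buys is brevity in the ``if'' direction once Sylvester is quoted. The bookkeeping you flag---matching the $(i,s)$ column index to the sparsity of $N_k$ in \eqref{Nmatrix}---is indeed the only delicate point, and you have it right: the $(i,s)$-column of $N_k$ is $e_i$ when $k=s$ (automatically $s\le t_i$) and zero otherwise.
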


\medskip

\begin{proof}
Let us order the observability indices as $t_1\geq t_2\geq \dots\geq t_\ell$ and set $t:=t_1$. Then, by \eqref{tsum}, $t\geq n$. Since $(Z,e)$ is a reachable pair, 
\begin{equation}
\label{rank=n}
\text{rank\,} \begin{bmatrix}Ze&Z^{2}e&\cdots&Z^t e\end{bmatrix} =n.
\end{equation}

First assume that $t=n$. Then, since $\text{rank}(A\otimes B)=\text{rank}(A)\text{rank}(B)$, 
\begin{displaymath}
V=\begin{bmatrix}Ze&Z^{2}e&\cdots&Z^n e\end{bmatrix}\otimes I_\ell \in\mathbb{C}^{\ell n\times \ell n}
\end{displaymath}
has rank $\ell n$, and $N\in\mathbb{R}^{n\ell\times n\ell}$ given by
\begin{equation}\label{Nn}
N=\begin{bmatrix}N_{1}\\\vdots\\N_{n}\end{bmatrix}, \quad N_{k}=\begin{bmatrix}
		e_{n}^{k}\\
		~&e_{n}^{k}\\
		~&~&\ddots\\
		~&~&~&e_{n}^{k}
		\end{bmatrix}
	\end{equation}
also has rank $\ell n$.  Consequently Sylverster's inequality,
\begin{displaymath}
\text{rank\,}V+\text{rank\,}N-\ell n\leq \text{rank\,} VN \leq \text{min\,}(\text{rank\,}V,\text{rank\,}N),
\end{displaymath}
(see, e.g., \cite[p.741]{LPbook}) implies that $VN$ has rank $\ell n$, and hence $L$ is nonsingular. 

Next assume that $t>n$. Then the first $t$ columns of $N$, now given by \eqref{Nmatrix} can be written $I_t \otimes (e^1_\ell)'$, so the first $t$ columns of $VN$ form the matrix
\begin{displaymath} 
\begin{split}
\left(\begin{bmatrix}Ze&Z^{2}e&\cdots&Z^t e\end{bmatrix}\otimes I_\ell\right)\left(I_t \otimes(e^1_\ell)'\right)\\
=\begin{bmatrix}Ze&Z^{2}e&\cdots&Z^t e\end{bmatrix}\otimes (e^1_\ell)',
\end{split}
\end{displaymath}
which in view of \eqref{rank=n} has rank $n<t$. Hence the columns of $VN$ are linearly dependent, and thus $L$ is singular. 
\end{proof}

\medskip

In the present matrix case, the relation \eqref{ab2sigma} reads
\begin{equation}
\label{AB2Sigma}
A(z)B(z^{-1})'+B(z)A(z^{-1})'=2\Sigma(z)RR'\Sigma(z^{-1})'.
\end{equation}
Let $\mathcal{P}_n$ be the space of pairs $(A,B)\in\mathcal{S}_{n}\times\mathcal{S}_{n}$ such that $A(z)^{-1}B(z)$ is positive real. 
Then the  problem at hand is to find, for each $\Sigma\in\mathcal{S}_{n}$, a pair $(A,B)\in\mathcal{P}_n$ such that  \eqref{AB2Sigma} and \eqref{interpolation} hold. 

\medskip

\begin{thm}\label{mainthm}
Given $(\Sigma,W)\in\mathcal{S}_n\times\mathcal{W}_+$, where $\Sigma(z)$ has all it observability indicies equal.  Then to any positive definite solution $P$ of the Covariance Extension Equation \eqref{CEE} such that $HPH'< I$,  there corresponds a unique analytic interpolant \eqref{AB2F}, where $A(z)$ and $B(z)$ have the same Jordan structure as $\Sigma(z)$,  the matrices $A$ and $B$ are given by 
\begin{equation}
\label{AB}
\begin{split}
A&=(I-U)(\Gamma PH'+\Sigma)-u \\
B&=(I+U)(\Gamma PH'+\Sigma)+u
\end{split}
\end{equation}
and $A(z)$ and $B(z)$ satisfy \eqref{AB2Sigma} with 
\begin{equation}
\label{P2R}
R=(I-HPH')^{\frac{1}{2}}.
\end{equation}
%The map sending $\Sigma$ to $A$ via this $P$ is injective. 
Finally, 
\begin{equation}
\label{deg=rank}    
\deg F(z) = \text{rank\,} P.
\end{equation}
\end{thm}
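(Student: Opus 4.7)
The plan is to reverse the derivation of the Covariance Extension Equation carried out in Section~\ref{sec:matrixcase}, using the solution $P$ of \eqref{CEE} to reconstruct a triple $(H,F,G)$ that satisfies both the underlying algebraic Riccati equation \eqref{Riccati} and the interpolation identity \eqref{interpolation}. Starting from a positive definite $P$ with $HPH' < I$, I set $R:=(I-HPH')^{1/2}$, which is well defined and positive definite, and define $(A,B)$ by \eqref{AB}. Adding and subtracting the two formulas in \eqref{AB} immediately gives $A+B=2(\Gamma PH'+\Sigma)$ and $G:=\tfrac12(B-A)=u+U(\Gamma PH'+\Sigma)$, so \eqref{GammaSigma2G} holds by construction, and therefore \eqref{G} and \eqref{AG2B} are satisfied. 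This fixes a realization $(H,F,G)$ with $F=J-AH$.

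Next I verify the interpolation condition. The identity \eqref{GammaSigma2G} is precisely \eqref{interpolation8}, which was obtained by pseudo-inverting $VN$ in \eqref{interpolation7}. Here is where the equal-observability-indices hypothesis enters: by Lemma~\ref{VNlem}, the matrix $L$ in \eqref{VN} is nonsingular, so the pseudo-inverse acts as a genuine left inverse on the relevant range (noting that the top $\ell$ rows of both $VN$ and $\hat T$ vanish because $z_0=0$ and $T_{00}=0$). Consequently \eqref{GammaSigma2G} is equivalent to \eqref{interpolation7}, hence to \eqref{interpolation2}, hence to the interpolation condition \eqref{interpolation}. Having established \eqref{GammaSigma2G}, substitution into the CEE \eqref{CEE} reproduces the modified algebraic Riccati equation \eqref{AREmod}, so $P$ is a positive definite solution of the ARE \eqref{Riccati} associated with $(H,F,G)$. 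Define $K:=(G-FPH')R^{-1}$; then \eqref{K} and \eqref{R} hold, and standard stochastic realization theory (as in \cite[Chapter 6]{LPbook}) implies that $V(z)=H(zI-F)^{-1}K+R=A(z)^{-1}\Sigma(z)R$ is the minimum-phase spectral factor of $\Phi_+(z)+\Phi_+(z^{-1})'$. Expanding the spectral factorization $VV^*=\Phi$ in the polynomial form $A(z)^{-1}B(z)$ yields \eqref{AB2Sigma} with the stated $R$, and since $\Sigma(z)$ and $A(z)$ are Schur and $R$ is invertible, $\Phi(e^{i\theta})>0$, so $A(z)^{-1}B(z)$ is positive real and \eqref{F+F*} holds. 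The Jordan structure of $A(z)$ and $B(z)$ is inherited from that of $\Sigma(z)$ because $D(z)$ and $\Pi(z)$ in \eqref{A(z)} are shared across the subclass. Uniqueness given $P$ is immediate from the closed-form expressions \eqref{AB}.

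For the degree formula, $(H,F)$ is observable by the observer canonical form, so the state dimension $\ell n$ of the realization of $F(z)$ is already observable; the McMillan degree is therefore the dimension of the reachable subspace, which equals $\text{rank}\,P$ since $P$ is the controllability Gramian attached to $(F,G)$ via \eqref{Riccati}. Cancelling the unreachable modes leaves a minimal realization of dimension $\text{rank}\,P$, proving \eqref{deg=rank}. The main obstacle in the whole argument is the step that invokes Lemma~\ref{VNlem}: outside the equal-observability-index regime the matrix $L$ is singular, and the passage between \eqref{GammaSigma2G} and \eqref{interpolation7} is no longer reversible — this is precisely what prevents the present theorem from covering arbitrary Jordan structure and motivates the remark that the theory falls short of what the scalar case promises.
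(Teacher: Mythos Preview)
Your approach is exactly the paper's: the authors simply write that ``the theorem follows from the derivation above'' and refer to \cite{BLpartial} for \eqref{deg=rank}, so you have filled in considerably more detail than they provide, and the reversal of the chain \eqref{interpolation2}--\eqref{interpolation8} via Lemma~\ref{VNlem} (including your observation that the top $\ell$ rows on both sides of \eqref{interpolation7} vanish automatically because $z_0=0$ and $T_{00}=0$) is the right mechanism.

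There is, however, a genuine slip in your argument for \eqref{deg=rank}. You write that $P$ ``is the controllability Gramian attached to $(F,G)$ via \eqref{Riccati}'', but this is not correct: the Riccati equation \eqref{Riccati} is equivalent to the Lyapunov equation $P=FPF'+KK'$ with $K=(G-FPH')(R')^{-1}$, so $P$ is the reachability Gramian of $(F,K)$, not of $(F,G)$. To conclude that $\text{rank}\,P$ equals the McMillan degree of $F(z)=\tfrac12 I+zH(I-zF)^{-1}G$ you still need to argue that the reachable subspaces of $(F,G)$ and $(F,K)$ coincide; this is the content of the argument in \cite{BLpartial} that the paper cites, and it does require a short additional step (essentially that $G$ and $K$ are related through $P$ in a way that preserves the $F$-invariant reachable subspace). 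A second, smaller point: you assert ``$A(z)$ is Schur'' when deducing positive realness, but nothing preceding that line establishes $A\in\mathcal{S}_n$; one needs the Lyapunov-type identity $P=FPF'+KK'$ together with $P>0$ to force the eigenvalues of $F$ into the closed disc, and then a controllability/observability argument to exclude the boundary. The paper does not spell this out either, but since you are supplying the details it is worth closing.
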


\medskip

\begin{proof}
%Except for the statement of injectivity,  the 
The theorem follows from the derivation above.
For the details of the proof of \eqref{deg=rank} we refer to \cite{BLpartial}. 
\end{proof}
\medskip

Let us stress that these results are considerably weaker than the corresponding theorem for the scalar case reviewed in Section~\ref{scalar}. In fact, Theorem~\ref{mainthm} does not guarantee that there exists a unique solution to \eqref{CEE}. In fact, if there were two solutions to \eqref{CEE}, there would be two interpolants, a unique one for each solution $P$. Moreover, the condition on the observability indices  restricts the classes of Jordan structures that are feasible. 

However, Theorem~\ref{mainthm} can be combined with other partial results on existence and uniqueness. There are multivariable problems for which we already know that there is a unique solution to the interpolation problem, and then existence and uniquenss of a solution  to \eqref{CEE} will follow.  A case in point is when $\Sigma(z)=\sigma(z) I$, where $\sigma(z)$ is a stable scalar polynomial \cite{BLN,FPZbyrneslindquist}, in which case the the observability indices are all equal, as required in Theorem~\ref{mainthm}.  In this case the analytic interpolation problem will have a unique solution, and thus, tracing the calculations in Section~\ref{prel} backwards, so will \eqref{CEE}. The same is true when $\Sigma(z)=\sigma(z) C$, where $C$ is full rank \cite{FPZbyrneslindquist}.

On the other hand, in recent years there have been a number of results \cite{FPZbyrneslindquist,RFP,Takyar,ZhuBaggio,Zhu} on the question of existence and uniqueness of the multivariate analytic interpolation problem, mostly for the covariance extension problem ($m=0, n_0=n+1$), but there are so far only partial results and for special structures of the prior (in our case $\Sigma(z)$). Especially the question of uniqueness has proven elusive. Perhaps, as suggested in \cite{Takyar}, this is due to the Jordan structure, and this could be the reason for the condition on the observability indices required in Theorem~\ref{mainthm}. In any case, as long as our algorithm delivers a solution to the Covariance Extension Equation, we will have a solution to the analytic interpolation problem, unique or not. An advantage of our method is that  \eqref{deg=rank} can be used for model reduction, as will be illustrated in Section~\ref{modelred}.

\subsection{Solving CEE by homotopy continuation}

We shall provide an algorithm for solving \eqref{CEE} based on homotopy continuation. We assume from now on that $t:= t_1=t_2=\dots,t_\ell =n$. Whenever this algoritm delivers a solution $P$, the interpolant is obtained via \eqref{AB}.  

When $u=0$, $\hat{T}=0$, and hence $U=0$. Then the modified Riccati equation \eqref{CEE} becomes $P=\Gamma(P-PH'HP)\Gamma'$, which has the solution $P=0$. We would like to make a continuous deformation of $u$ to go from this trivial solution to the solution of \eqref{CEE}, so we choose $u(\lambda)=\lambda u$ with $\lambda\in [0,1]$. The corresponding deformation of $U$ is $\lambda U$, and  $T$ is deformed to $\lambda T$. The value matrix \eqref{W} will vary as
\begin{displaymath}
W(\lambda)=(I-\lambda T)^{-1} -\tfrac12 I,
\end{displaymath}
 and we need to ascertain that $W(\lambda)$ still satisfies \eqref{Pickmatrix}
 along the whole trajectory.

\medskip

\begin{lemma}
Suppose that $W\in\mathcal{W}_+$. Then $W(\lambda)\in\mathcal{W}_+$ for all $\lambda\in [0,1]$.
\end{lemma}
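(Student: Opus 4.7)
The plan is to reduce the generalized Pick condition to a discrete Lyapunov (Stein) matrix inequality via a Cayley-type transformation of $W$, and then exploit the resulting contractivity.

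Writing $\mathcal{P}:=S\otimes I_\ell$, I would first note that the hypothesis $W\mathcal{P}+\mathcal{P}W^{*}>0$ forces $W+\tfrac12 I$ to be invertible (a null vector, after testing the Pick inequality against its image under $\mathcal{P}^{-1}$, yields a sign contradiction). Hence $T$ in \eqref{T} is well defined, $I-T=(W+\tfrac12 I)^{-1}$ is invertible, and $W=(I-T)^{-1}-\tfrac12 I$. Substituting this into $W\mathcal{P}+\mathcal{P}W^{*}$ and congruence-transforming by $(I-T)$ on the left and $(I-T)^{*}$ on the right yields, after expansion, the Cayley identity
\begin{equation}
(I-T)\bigl[W\mathcal{P}+\mathcal{P}W^{*}\bigr](I-T)^{*}=\mathcal{P}-T\mathcal{P}T^{*} .
\label{cayleyid}
\end{equation}
Thus the Pick condition is equivalent to the Stein inequality $\mathcal{P}>T\mathcal{P}T^{*}$, which by the classical Lyapunov--Stein theorem forces $T$ to be Schur stable.

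Next I compute the Cayley transform of $W(\lambda)=(I-\lambda T)^{-1}-\tfrac12 I$. From $W(\lambda)+\tfrac12 I=(I-\lambda T)^{-1}$ and $W(\lambda)-\tfrac12 I=\lambda T(I-\lambda T)^{-1}$ one reads off $(W(\lambda)-\tfrac12 I)(W(\lambda)+\tfrac12 I)^{-1}=\lambda T$. Since $T$ is Schur stable, $|\lambda\mu|<1$ for every eigenvalue $\mu$ of $T$ and every $\lambda\in[0,1]$, so $I-\lambda T$ is invertible throughout the homotopy and $W(\lambda)$ is well defined. Applying \eqref{cayleyid} with $T$ replaced by $\lambda T$ reduces the condition $W(\lambda)\in\mathcal{W}_+$ to $\mathcal{P}-\lambda^{2}T\mathcal{P}T^{*}>0$.

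Finally, the decomposition
\[
\mathcal{P}-\lambda^{2}T\mathcal{P}T^{*}=\bigl(\mathcal{P}-T\mathcal{P}T^{*}\bigr)+(1-\lambda^{2})\,T\mathcal{P}T^{*}
\]
exhibits the left-hand side as the sum of a strictly positive matrix (by the hypothesis $W\in\mathcal{W}_+$) and a positive semidefinite matrix (since $\lambda^{2}\le 1$ and $\mathcal{P}>0$), hence it is strictly positive. This is $W(\lambda)\in\mathcal{W}_+$. The only genuinely technical step is the Cayley identity \eqref{cayleyid}; once that is in hand, the conclusion is an immediate monotonicity argument in $\lambda^{2}$.
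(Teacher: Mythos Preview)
Your proof is correct and follows essentially the same route as the paper: both reduce the Pick condition for $W(\lambda)$ to the congruence identity $(I-\lambda T)\bigl[W(\lambda)\mathcal P+\mathcal P W(\lambda)^*\bigr](I-\lambda T)^*=\mathcal P-\lambda^2 T\mathcal P T^*$ and then invoke the monotonicity $\mathcal P-\lambda^2 T\mathcal P T^*\ge \mathcal P-T\mathcal P T^*>0$. The only difference is that you supply the auxiliary invertibility checks (of $W+\tfrac12 I$ and of $I-\lambda T$ via Schur stability of $T$) that the paper's proof leaves implicit.
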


\begin{proof}
By \eqref{Pickmatrix} we want to show that 
\begin{equation}
\label{Wlambda}
W(\lambda)E+EW(\lambda)^*>0
\end{equation}
 for $E:=S\otimes I_\ell$. We have
\begin{displaymath}
\begin{split}
&W(\lambda)E+EW(\lambda)^* \\
&=\left((I -\lambda T)^{-1}-\tfrac12 I\right)E+E\left(I-\lambda T^*)^{-1}-\tfrac12 I\right)\\
&= (I -\lambda T)^{-1}(E-\lambda^2TET^*) (I -\lambda T^*)^{-1}
\end{split}
\end{displaymath}
which we know is positive definite for $\lambda=1$. However,
\begin{displaymath}
E-\lambda^2TET^*\geq E-TET^*,
\end{displaymath}
and and therefore \eqref{Wlambda} holds  {\em a fortiori}. 
\end{proof}

\medskip

Now, note that equation \eqref{AB2Sigma} can be written as
\begin{equation}
\label{SM}
\begin{split}
&S(A)M(B)+S(B)M(A)\\
&=2S(\Sigma)(I_{n+1}\otimes RR')M(\Sigma)
\end{split}
\end{equation}
where
\begin{equation*}
S(A)=\begin{bmatrix}
I&A_{1}&\cdots&A_n\\
~&I&\cdots&A_{n-1}\\
~&~&\ddots&\vdots\\
~&~&~&I
\end{bmatrix}\qquad M(A)=\begin{bmatrix}
I\\
A_{1}'\\
\vdots\\
A_{n}'
\end{bmatrix}.
\end{equation*}

\medskip

\begin{lemma}\label{ANBn}
Let $N_n$ be defined by \eqref{Nn}. Then
\begin{displaymath}
A_n+B_n=2\Sigma_nRR' ,
\end{displaymath}
where $A_n=N_nA$, $B_n=N_nB$ and $\Sigma_n=N_n\Sigma$.
\end{lemma}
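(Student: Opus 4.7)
The plan is to extract the identity $A_n+B_n=2\Sigma_nRR'$ directly from the spectral factorization identity \eqref{AB2Sigma}, or equivalently from its block-matrix reformulation \eqref{SM}. Since the hypothesis of this subsection forces all observability indices to equal $n$, the polynomials $A(z)$, $B(z)$ and $\Sigma(z)$ all have degree exactly $n$ with leading coefficient $I_\ell$, and the products $N_kA$, $N_kB$, $N_k\Sigma$ for $k=1,\dots,n$ pick out the $z^k$ coefficients of the reversed polynomials $A_*(z)$, $B_*(z)$, $\Sigma_*(z)$ respectively. Equivalently, $A_n$, $B_n$ and $\Sigma_n$ are the constant terms of $A(z)$, $B(z)$ and $\Sigma(z)$.

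First I would write
\begin{displaymath}
A(z)=z^nI_\ell+\sum_{k=1}^n A_k z^{n-k},\qquad A(z^{-1})'=I_\ell+\sum_{k=1}^n A_k' z^{-k}\cdot z^0
\end{displaymath}
(and likewise for $B$ and $\Sigma$) and expand both sides of \eqref{AB2Sigma} as Laurent polynomials in $z$. The highest positive power is $z^n$, which on the left arises only from the pairing of the leading monomial $z^nI_\ell$ of $A(z)$ (resp.\ $B(z)$) with the constant term of $B(z^{-1})'$ (resp.\ $A(z^{-1})'$), producing exactly $B_n'+A_n'$. On the right, the only contribution to $z^n$ comes from the leading term $z^nI_\ell$ of $\Sigma(z)$ paired with the constant term of $\Sigma(z^{-1})'$, giving $RR'\Sigma_n'$. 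Matching coefficients of $z^n$ and transposing yields the claimed identity.

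Alternatively, one can read the identity straight off the last block row of \eqref{SM}: the bottom block of $S(A)$ is $[0\,\cdots\,0\;I]$, so the bottom block of $S(A)M(B)$ equals $B_n'$, and similarly $S(B)M(A)$ contributes $A_n'$; on the right, the bottom block reduces to $RR'\Sigma_n'$ by the same telescoping. Since this step is a pure coefficient comparison once the equal-observability-index assumption has normalized the leading structure of $A(z)$, $B(z)$, $\Sigma(z)$, there is no substantive obstacle---the only bookkeeping care needed is to verify that when the observability indices are all $n$, the block $N_n$ in \eqref{Nn} indeed selects the constant term of each matrix polynomial, which is immediate from its definition.
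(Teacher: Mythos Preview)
Your argument is correct: matching the coefficient of $z^n$ in \eqref{AB2Sigma} (equivalently, reading the last block row of \eqref{SM}) gives $A_n'+B_n'=2RR'\Sigma_n'$, and transposing (using that $RR'$ is symmetric) yields the lemma. The bookkeeping that $A_n,B_n,\Sigma_n$ are the constant terms of $A(z),B(z),\Sigma(z)$ once all observability indices equal $n$ is exactly right.

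The paper's proof, however, goes a different way: it does not look at \eqref{AB2Sigma} at all. Instead it adds the two formulas in \eqref{AB} to get $A+B=2(\Gamma PH'+\Sigma)$, rewrites this via $\Gamma=J-\Sigma H$ and \eqref{R} as $A+B=2(JPH'+\Sigma RR')$, and then multiplies by $N_n$, using the shift-matrix fact $N_nJ=0$ (since $e_n^nJ_n=0$) to kill the first term. Your route is more elementary and model-independent---it works whenever the spectral identity \eqref{AB2Sigma} holds, with no reference to the Riccati variable $P$ or the parameterization \eqref{AB}. The paper's route ties the identity directly to the CEE variables, which fits the surrounding homotopy construction where $A$ and $B$ are parameterized through $p=PH'$; it also makes visible the structural reason (the nilpotent action of $J$ on the ``last'' coefficient) rather than a coefficient comparison. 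Both are short and valid; they simply illuminate different facets of the same equality.
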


\begin{proof}
From \eqref{AB} and \eqref{R}  we have 
\begin{displaymath}
A+B=2(\Gamma PH'+\Sigma)=2(JPH' +\Sigma RR').
\end{displaymath}
Since $e^n_nJ_n=0$ and hence $N_nJ=0$, the statement of the lemma follows.
\end{proof}

\medskip

Applying Lemma~\ref{ANBn} and deleting the zero row in \eqref{SM}, it can be reduced to $n\ell\times \ell$ equations
\begin{align*}
\begin{bmatrix}
I_{n\ell}&0_{n\ell\times \ell}
\end{bmatrix}&(S(A)M(B)+S(B)M(A))\\
&=2\begin{bmatrix}I_{n\ell}&0_{n\ell\times \ell}\end{bmatrix}S(\Sigma)(I_{n+1}\otimes RR')M(\Sigma)
\end{align*}
%When $u=0$, the modified Riccati equation \eqref{CEE} becomes $P=\Gamma(P-PH'HP)\Gamma'$, which has the solution $P=0$. We would like to make a continuous deformation of $u$ to go from this trivial solution to the solution of \eqref{CEE}, so we choose $u(\lambda)=\lambda u$ with $\lambda\in [0,1]$. The corresponding deformation of $U$ is $\lambda U$, and it also amounts to deforming $T$ as $\lambda T$. 

Therefore, introducing the  $n\ell\times \ell$ matrix
\begin{equation}
p=PH' ,
\end{equation}
we use the homotopy 
\begin{equation}
\begin{split}
\mathcal{H}(p,\lambda):=&\begin{bmatrix}
I_{n\ell}&0_{n\ell\times \ell}
\end{bmatrix}\big(S(A)M(B)+S(B)M(A)\\
&-2S(\Sigma)(I_{n+1}\otimes (I-Hp))M(\Sigma)\big)=0 ,
\end{split}
\end{equation}
where
\begin{equation}
\begin{split}
A&=A(p,\lambda):=\Gamma p+\Sigma-\lambda u-\lambda U(\Gamma p +\Sigma) \\
B&=B(p,\lambda):=\Gamma p+\Sigma+\lambda u+\lambda U(\Gamma p +\Sigma)\\
\end{split}
\end{equation}
depend on $(p,\lambda)$. 
Then the problem reduces to solving the differential equation
\begin{equation}
\begin{split}
\label{diffequ}
&\frac{d}{d\lambda}\text{vec}(p(\lambda))=\left[\frac{\partial \text{vec}(\mathcal{H}(p,\lambda))}{\partial \text{vec}(p)}\right]^{-1}\frac{\partial \text{vec}(\mathcal{H}(p,\lambda))}{\partial\lambda}\\
&\text{vec}(p(0))=0
\end{split}
\end{equation}
\cite{AllgowerGeorg}, which has the solution $ \hat{p}(\lambda) $ for $ 0\leq\lambda\leq1$. The solution of \eqref{CEE} is then obtained by finding the unique solution of the  Lyapunov equation
\begin{equation}
\begin{split}
P-\Gamma P\Gamma' =&-\Gamma p(1)p(1)'\Gamma' \\
&+(u + U(\Gamma p(1) +\Sigma))(u + U(\Gamma p(1) +\Sigma))'.
\end{split}
\end{equation}

\section{Some simple illustrative examples}
Next we provide a few simulations that illustrate the theory. 

\subsection{Example 1}
We consider a problem with  the interpolation constraints
\begin{displaymath}
\begin{split}
&F(0)=\frac{1}{2}\begin{bmatrix}1&0\\0&1\end{bmatrix}\\
& F(0.5)=\begin{bmatrix}1&0\\0&0.4\end{bmatrix}\quad 
 F^{(1)}(0.5)=\begin{bmatrix}2&0.1\\0&0.1\end{bmatrix},
\end{split}
\end{displaymath}
where $ n=2,\ell=2$. This yields
\begin{equation}
	Z=\begin{bmatrix}
	0&~&~\\
	~&0.5&~\\
	~&1&0.5\\
	\end{bmatrix}\quad e=\begin{bmatrix}
	1\\
	1\\
	0
	\end{bmatrix}
	\end{equation}
	
Taking $\Sigma(z)$ of the form
\begin{equation}
\label{Sigma(z)2}
\Sigma(z)=\begin{bmatrix}z^2&0\\0&z^2\end{bmatrix}+\Pi(z)\Sigma
\end{equation}
we have $t_1=t_2=2$, and thus
\begin{displaymath}
VN=\begin{bmatrix}
	0_{2\times 4}\\
	L
	\end{bmatrix},\quad L=\begin{bmatrix}
	0.5&0.25&0&0\\
	0&0&0.5&0.25\\
	1&1&0&0\\
	0&0&1&1
	\end{bmatrix},
\end{displaymath}
where clearly $L$ is nonsingular as required. 
Then choosing
\begin{equation}
\label{Sigmavalu}
\Sigma=\begin{bmatrix}
1& 0.3\\
0.2& 0.3\\
0.1& 0.4\\
0.7& 0.2
\end{bmatrix},
\end{equation}
the matrix polynomial \eqref{Sigma(z)2} is stable, and we can use our algorithm to obtain
\begin{equation}
\label{A(z)B(z)}
\begin{split}
&A(z)=\begin{bmatrix}z^2&0\\0&z^2\end{bmatrix}+\Pi(z)A\\
&B(z)=\begin{bmatrix}z^2&0\\0&z^2\end{bmatrix}+\Pi(z)B ,
\end{split}
\end{equation}
where
\begin{displaymath}
A=\begin{bmatrix}
0.9467 &  -0.1737\\
0.3603 &   0.3583\\
-0.0445&    1.0925\\
0.2147  &  0.7364
  \end{bmatrix}\quad  B=\begin{bmatrix}
 -0.0533&    0.2263\\
-0.3517  & -0.2893\\
-0.2445  &  0.0925\\
0.2406   &-0.9739
  \end{bmatrix}
\end{displaymath}

If instead we choose $\Sigma(z)$ of the form
\begin{displaymath}
\Sigma(z)=\begin{bmatrix}z^3&0\\0&z\end{bmatrix}+\Pi(z)\Sigma,
\end{displaymath}
then $t=t_{1}=3,t_{2}=1$, and thus
  \begin{displaymath}
VN=\begin{bmatrix}
  0_{2\times 4}\\
  L
  \end{bmatrix}, \quad L=\begin{bmatrix}
0.5&0.25&0.125&0\\
  0&0&0&0.5\\
  1&1&0.75&0\\
  0&0&0&1
  \end{bmatrix},
\end{displaymath}
where $L$ is singular as anticipated by Lemma~\ref{VNlem}. Hence the algorithm cannot be used. 

\subsection{Example 2}
Next consider the multivariable covariance extension problem
\begin{displaymath}
F(0)=\tfrac12 I_2, \quad F^{(1)}(0)=C_1, \quad F^{(2)}(0)=C_2,
\end{displaymath}
where
\begin{displaymath}
C_1=\begin{bmatrix}
-0.5&0.2\\
-0.1&-0.5
\end{bmatrix}, \quad
C_2= 2\begin{bmatrix}
0.1&-0.6\\
0.1&-0.3
\end{bmatrix}.
\end{displaymath}
In this case 
\begin{displaymath}
Z=\begin{bmatrix}
  0&~&~\\
  1&0&~\\
  ~&1&0\\
  \end{bmatrix}\quad e=\begin{bmatrix}
  1\\
  0\\
  0
  \end{bmatrix}
\end{displaymath}
By Lemma~\ref{VNlem}, we need to choose $t_1=t_2=2$ and this yields 
\begin{displaymath}
VN=\begin{bmatrix}
  0_{2\times 4}\\
  L
  \end{bmatrix},\quad L=\begin{bmatrix}
1&     0&     0 &    0\\
0 &    0 &    1  &   0\\
0  &   1  &   0   &  0\\
0   &  0   &  0    & 1
  \end{bmatrix},
\end{displaymath}
where $L$ is nonsingular. Choosing the $\Sigma(z)$ in \eqref{Sigmavalu},
  our algoritm delivers the solution \eqref{A(z)B(z)} with
  \begin{displaymath}
A=\begin{bmatrix}
0.9467 &  -0.1737\\
0.3603 &   0.3583\\
-0.0445&    1.0925\\
0.2147  &  0.7364
  \end{bmatrix}\quad  B=\begin{bmatrix}
 -0.0533&    0.2263\\
-0.3517  & -0.2893\\
-0.2445  &  0.0925\\
0.2406   &-0.9739
  \end{bmatrix} .
\end{displaymath}
Again the choice  $t=t_{1}=3,t_{2}=1$ of obsevability indices does not work (Lemma~\ref{VNlem}).

\subsection{Model reduction}\label{modelred}
Consider a system with a $2\times 2$ transfer function 
\begin{equation}\label{truesystem}
V(z)=A(z)^{-1}\Sigma(z)
\end{equation}
of dimension $10$ and with observability indices $t_1=t_2=5$,
where $A(z)$ and $\Sigma(z)$ are given by  \eqref{A(z)} repectively \eqref{Sigma(z)} with
%have the structure \eqref{A(z)},with
\begin{equation*}
A=\begin{bmatrix}
-0.11&   -0.02\\
-0.08 &  -0.15\\
0.05  &  0.10\\
-0.05 &  -0.09\\
-0.13 &  -0.09\\
0.11 &   0.07\\
0.09  &  0.19\\
-0.03 &  -0.03\\
-0.10 &  -0.13\\
0.12  &  0.05
\end{bmatrix}, \quad \Sigma=\begin{bmatrix}
0.1500 & 0\\
-0.6900 &  0\\
0.1025 &0\\
0.0306 &0\\
-0.0034 & 0\\
0  &  0.1500\\
0  & -0.6900\\
0  &  0.1025\\
0  &  0.0306\\
0 &  -0.0034\\
\end{bmatrix}.
\end{equation*}
Here $ \Sigma(z)=\sigma(z) I_2$, were 
\begin{displaymath}
\sigma(z)=(z-0.1)(z-0.3)(z-0.6)(z+0.2)(z+0.95).
\end{displaymath}
We pass (normalized) white noise though the system
\begin{figure}[thb!]
	\centering
	\includegraphics[width = 0.6\linewidth]{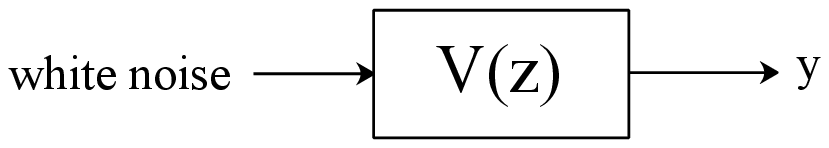}
	%\caption{The system model}
	%\label{figure1}
\end{figure}\\
to obtain the output $y_{0},y_{2},\cdots,y_{N}$, and from this output data we estimate the $2\times 2$ matrix valued covariance sequence 
\begin{equation}
\hat{C}_{k}=\frac{1}{N-k+1}\sum_{t=k}^{N}y_{t}y_{t-k}' .
\end{equation}
Then we solve the problem \eqref{interpolation} with $\ell=2$, $m=0$, $n_0=6$, and $W_{0k}=\hat{C}_{k}$ for $k=0,2,\dots,5$. The modified Riccati equation \eqref{CEE} has a solution $P$ with eigenvalues
\begin{equation*}
\begin{split}
1.5\times10^{-6},2.7\times10^{-5},0.0007,0.0041,
\\0.0104,0.0338,0.1993,0.3457,0.6535,0.7138.
\end{split}
\end{equation*}
The first four eigenvalues are very small, so we can reduce the degree of this system from ten to  six by choosing the first four covariance lags $\hat{C}_0,\hat{C}_1,\hat{C}_2,\hat{C}_3$  and removing zeros of $\Sigma(z)$ at $-0.2$ and $0.6$. The reduced-order system will have observability indices $t_1=t_2=3$. The singular values of the true system \eqref{truesystem} are shown the multivariable Bode-type plot in Fig.~\ref{figure2} together with those of the estimated systems of degree ten and six, respectively. As can be seen, the reduced-order system is a good approximation.
\begin{figure}[thb!]
	\centering
	\includegraphics[width = 0.8\linewidth]{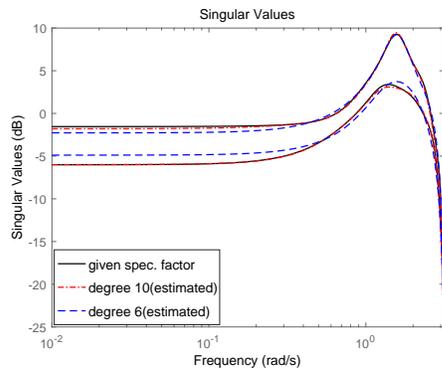}
	\caption{Estimated singular values and the true ones}
	\label{figure2}
\end{figure}

\section{Concluding remarks}\label{conclusion}

We have extended our previous results \cite{CLccdc19} for the scalar case to the matrix case. However, multivariable versions of analytic interpolation with rationality constraints have been marred by difficulties to establish existence and, in particular, uniqueness in the various parameterizations \cite{Gthesis,BLN,G2006,G2007,FPZbyrneslindquist,RFP,Takyar,ZhuBaggio,Zhu}, and we have encountered similar difficulties here. Our approach attacks these problems from a different angle and might put new light on these challenges.
Therefore future research efforts will be directed towards settling these intriguing open questions in the context of the modified Riccati equation \eqref{CEE}. 

%\section{Appendix: Deferred proofs}
%
%\bigskip
%
%----------------------------------
%
%\bigskip

\bibliographystyle{IEEEtran}

\end{document}